\newcommand{\bburl}[1]{\textcolor{blue}{\url{#1}}}
\newcommand{\monthyear}[1]{%
  \def\@monthyear{\uppercase{#1}}}
\newcommand{\volnumber}[1]{%
  \def\@volnumber{\uppercase{#1}}}
\def\ps@plain{\ps@empty
  \def\@oddfoot{\@monthyear \hfil \thepage}%
  \def\@evenfoot{\thepage \hfil \@volnumber}}
\def\ps@firstpage{\ps@plain}
\def\ps@headings{\ps@empty
  \def\@evenhead{%
    \setTrue{runhead}%
    \def\thanks{\protect\thanks@warning}%
    \uppercase{\ }\hfil}%
  \def\@oddhead{%
    \setTrue{runhead}%
    \def\thanks{\protect\thanks@warning}%
    \hfill\uppercase{Generalizing Zeckendorf's Theorem}}%
  \let\@mkboth\markboth
  \def\@evenfoot{%
    \thepage \hfil \@volnumber}%
  \def\@oddfoot{%
    \@monthyear \hfil \thepage}%
  }%
\theoremstyle{plain}
\numberwithin{equation}{section}
\newtheorem{thm}{Theorem}[section]
\newtheorem{theorem}[thm]{Theorem}
\newtheorem{lemma}[thm]{Lemma}
\newtheorem{conjecture}[thm]{Conjecture}
\newtheorem{corollary}[thm]{Corollary}
\newtheorem{example}[thm]{Example}
\newtheorem{definition}[thm]{Definition}
\theoremstyle{definition}
\newtheorem*{remark}{Remark}
\newcommand{\ignore}[1]{}
\newcommand{\pn}[1]{\left( #1 \right)}
\newcommand\be{\begin{eqnarray}}
\newcommand\ee{\end{eqnarray}}
\newcommand\bea{\begin{eqnarray}}
\newcommand\eea{\end{eqnarray}}
\newcommand\ben{\begin{enumerate}}
\newcommand\een{\end{enumerate}}
\begin{document}

\monthyear{August 2021}
\volnumber{}
\setcounter{page}{1}

\title{Generalizing Zeckendorf's Theorem to Homogeneous Linear Recurrences, I}

\author{Thomas C. Martinez, Steven J. Miller, Clayton Mizgerd, Chenyang Sun}


\address{\tiny{Department of Mathematics, Harvey Mudd College, Claremont, CA 91711}} \email{tmartinez@hmc.edu}

\address{\tiny{Department of Mathematics and Statistics, Williams College, Williamstown, MA 01267}} \email{sjm1@williams.edu, Steven.Miller.MC.96@aya.yale.edu}
\email{cmm12@williams.edu}
\email{cs19@williams.edu}


\date{\today}

\begin{abstract} Zeckendorf's theorem states that every positive integer can be written uniquely as the sum of non-consecutive shifted Fibonacci numbers $\{F_n\}$, where we take $F_1=1$ and $F_2=2$. This has been generalized for any Positive Linear Recurrence Sequence (PLRS), which informally is a sequence satisfying a homogeneous linear recurrence with a positive leading coefficient and non-negative integer coefficients. These decompositions are generalizations of base $B$ decompositions. In this and the followup paper, we provide two approaches to investigate linear recurrences with leading coefficient zero, followed by non-negative integer coefficients, with differences between indices relatively prime (abbreviated ZLRR). The first approach involves generalizing the definition of a legal decomposition for a PLRS found in Kolo\u{g}lu, Kopp, Miller and Wang. We prove that every positive integer $N$ has a legal decomposition for any ZLRR using the greedy algorithm. We also show that a specific family of ZLRRs loses uniqueness of decompositions. The second approach converts a ZLRR to a PLRR that has the same growth rate. We develop the Zeroing Algorithm, a powerful helper tool for analyzing the behavior of linear recurrence sequences. We use it to prove a very general result that guarantees the possibility of conversion between certain recurrences, and develop a method to quickly determine whether certain sequences diverge to $+\infty$ or $-\infty$, given any real initial values. This paper investigates the first approach.
\end{abstract}

\thanks{This work was supported by NSF Grants DMS1561945 and DMS1659037, as well as the Finnerty Fund. We thank the participants of the 2019 Williams SMALL REU and the referee for constructive comments.}

\maketitle

\tableofcontents



\section{Introduction and Definitions}\label{sec:intro}


\subsection{History and Past Results}\label{sec:history}

The Fibonacci numbers are one of the most well-known and well-studied mathematical objects, and have captured the attention of mathematicians since their conception. This paper focuses on a generalization of Zeckendorf's theorem, one of the many interesting properties of the Fibonacci numbers. Zeckendorf \cite{Ze} proved that every positive integer can be written \textbf{uniquely} as the sum of non-consecutive Fibonacci numbers (called the \textit{Zeckendorf Decomposition}), where the (shifted) Fibonacci numbers\footnote{If we use the standard initial conditions then 1 appears twice and uniqueness is lost.} are $F_1 = 1, F_2 = 2, F_3 = 3, F_4 = 5, \dots$. This result has been generalized to other types of recurrence sequences. We set some notation before describing these generalizations.

\begin{definition}[\cite{KKMW}, Definition 1.1, (1)]\label{def:plrrdefinition}
    We say a recurrence relation is a \textbf{Positive Linear Recurrence Relation (PLRR)} if there are non-negative integers $L, c_1, \dots, c_L$ such that
        \begin{equation}
            H_{n+1}\ =\ c_1\, H_n + \cdots + c_L\, H_{n+1-L},
        \end{equation}
        with $L, c_1$ and $c_L$ positive.
\end{definition}

\begin{definition}[\cite{KKMW}, Definition 1.1, (2)]\label{def:plrsdefinition}
    We say a sequence $\{H_n\}_{n=1}^{\infty}$ of positive integers arising from a PLRR is a \textbf{Positive Linear Recurrence Sequence (PLRS)} if $H_1=1$, and for $1 \leq n < L$ we have
        \begin{equation}
            H_{n+1}\ =\ c_1\,H_n + c_2\,H_{n-1} + \cdots + c_n \,H_1 + 1.
        \end{equation}
    We call a decomposition $N=\sum_{i=1}^m a_i H_{m+1-i}$ of a positive integer, and its associated sequence $\{a_i\}_{i=1}^m$, \textbf{legal} if $a_1>0$, the other $a_i\geq 0$, and one of the following holds.\vspace{1mm}
    \begin{itemize}
        \item \emph{Condition 1:} We have $m<L$ and $a_i = c_i$ for $1 \leq i \leq m$,
        \item \emph{Condition 2:} There exists $s\in \{1,\dots,L\}$ such that
        \[
            a_1\ =\ c_1, \ \ a_2\ =\ c_2, \ \ \dots, \ \ a_{s-1}\ =\ c_{s-1}, \ \ a_s\ <\ c_s,
        \]
        $a_{s+1}, \dots , a_{s+\ell} = 0$ for some $\ell \geq 0$, and $\{a_{s+\ell+i}\}_{i=1}^{m-s-\ell}$is legal.
    \end{itemize}
Additionally, we let the empty decomposition be legal for $N=0$.
\end{definition}

\begin{remark}
Informally, a legal decomposition is one where we cannot use the recurrence relation to replace a linear combination of summands with another summand, and the coefficient of each summand is appropriately bounded; other authors \cite{DG, Ste} use the phrase $G$-ary decomposition for a legal decomposition. For example, if $H_{n+1} = 3H_n + 2H_{n-1} + 4H_{n-2}$, then $H_5 + 3H_4 + 2H_3 + 3H_2$ is legal, while $H_5 + 3H_4 + 2H_3 + 4H_2$ is not since we can replace $3H_4 + 2H_3 + 4H_2$ with $H_5$; similarly $6H_5+2H_4$ is not legal because the coefficient of $H_5$ is too large.
\end{remark}

We now state an important generalization of Zeckendorf's theorem, then describe what recurrences and sequences we are studying, followed by our results. See \cite{BBGILMT, BM, BCCSW, CFHMN, CFHMNPX, DFFHMPP, Ho,MNPX, MW, Ke, Len} for more on generalized Zeckendorf decompositions, and \cite{GT, MW} for a proof of Theorem \ref{thm:genzeckthmforPLRS}.
\begin{theorem}[Generalized Zeckendorf's theorem for a PLRS]\label{thm:genzeckthmforPLRS}
Let $\{H_n\}_{n=1}^{\infty}$ be a \emph{Positive Linear Recurrence Sequence}. Then
\begin{enumerate}
    \item there is a unique legal decomposition for each non-negative integer $N \geq 0$, and
    \item there is a bijection between the set $\mathcal{S}_n$ of integers in $[H_n,H_{n+1})$ and the set $\mathcal{D}_n$ of legal decompositions $\sum_{i\,=\,1}^n a_i\, H_{n+1-i}$.
\end{enumerate}
\end{theorem}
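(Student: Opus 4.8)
The plan is to establish part (2) first and then read off part (1), since the two are tightly linked. Once we know that the length-$n$ legal decompositions are in bijection with $\mathcal{S}_n = [H_n, H_{n+1}) \cap \mathbb{Z}$, existence and uniqueness for a general $N$ follow because the half-open intervals $[H_n, H_{n+1})$ partition $\mathbb{Z}_{\ge 1}$ (here $H_1 = 1$ and $H_n \to \infty$), with $N = 0$ handled by the empty decomposition. So the whole theorem reduces to analyzing, for fixed leading index $n$, the map $\Phi_n : \mathcal{D}_n \to \mathbb{Z}_{\ge 0}$ sending a legal sequence $(a_1, \dots, a_n)$ with $a_1 > 0$ to $\sum_{i=1}^n a_i H_{n+1-i}$.

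The first key step is a \emph{range lemma}: every $N$ in the image of $\Phi_n$ lies in $[H_n, H_{n+1})$. The lower bound is immediate, since $a_1 \ge 1$ forces $N \ge H_n$. The upper bound is the substantive claim, and amounts to the identity that the lexicographically largest legal sequence of length $n$ evaluates to exactly $H_{n+1} - 1$. I would prove this by induction on $n$: the base cases $n < L$ use the PLRS initial condition $H_{n+1} = c_1 H_n + \cdots + c_n H_1 + 1$ from Definition \ref{def:plrsdefinition}, and the inductive step uses the recurrence $H_{n+1} = c_1 H_n + \cdots + c_L H_{n+1-L}$ of Definition \ref{def:plrrdefinition} to show that the maximal legal sequence consists of the almost-saturated leading block $(c_1, \dots, c_{L-1}, c_L - 1)$ followed by the maximal legal sequence on the remaining $n - L$ indices; telescoping the recurrence then yields exactly $H_{n+1} - 1$. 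Because the intervals $[H_n, H_{n+1})$ are disjoint, this lemma already pins down the leading index of any legal decomposition of $N$: it must be the unique $n$ with $H_n \le N < H_{n+1}$.

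With the length forced, I would show $\Phi_n$ is a bijection onto $\mathcal{S}_n$. For injectivity, I would run a greedy recovery: given $N \in [H_n, H_{n+1})$, the range lemma applied to the tails shows that the initial block matching $(c_1, \dots, c_{s-1}, a_s)$ with $a_s < c_s$ is uniquely determined by the size of $N$, after which one recurses on the remainder, which by construction is the value of a legal decomposition of strictly smaller length. Comparing two legal decompositions of the same $N$ term by term, the legal condition forces their leading blocks to agree, and hence, by induction, the whole sequences to agree. Surjectivity (equivalently, existence) then follows either from the same greedy construction—verifying that each step produces a legal leading block together with a valid shorter tail—or, more cleanly, by a counting argument: from Condition~1 and Condition~2 one derives a recurrence for $|\mathcal{D}_n|$ matching the recurrence satisfied by $H_{n+1} - H_n = |\mathcal{S}_n|$, so that an injection between equinumerous finite sets is automatically a bijection.

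The main obstacle is the recursive, block-structured form of Condition~2, with its parameters $s$ and $\ell$ and its insistence that the tail $\{a_{s+\ell+i}\}$ itself be legal. The delicate point throughout is to show that the greedy choice of leading block always leaves a remainder that is simultaneously (i) nonnegative and small enough to be the value of a legal decomposition of the correct shorter length, and (ii) the \emph{only} legal possibility, so that no competing block represents the same $N$. Both of these rest on invoking the maximal-decomposition identity of the range lemma at every level of the recursion, so getting that identity and its induction exactly right is where the real work lies; the counting recurrence in the surjectivity step is routine once the legal condition has been correctly unwound.
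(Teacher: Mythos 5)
The paper does not actually prove Theorem \ref{thm:genzeckthmforPLRS}; it quotes the result and refers to \cite{GT} and \cite{MW} for the proof. Your outline is essentially the standard argument given in those references: partition $\mathbb{Z}_{\ge 1}$ into the intervals $[H_n,H_{n+1})$, prove by induction that every legal decomposition with leading term $H_n$ has value in $[H_n,H_{n+1})$ (the key identity being that the saturated block $(c_1,\dots,c_{L-1},c_L-1)$ plus a maximal tail telescopes to $H_{n+1}-1$, with the $n<L$ cases handled by the PLRS initial conditions), and then use that interval localization to force the leading block in any decomposition of $N$, giving injectivity and, via the greedy construction or a counting recurrence for $|\mathcal{D}_n|=H_{n+1}-H_n$, surjectivity. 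This is correct; the only point to state carefully when writing it up is that the upper bound must be proved for \emph{every} legal sequence of length $n$ (by bounding the tail with the inductive hypothesis), not merely verified for the lexicographically largest one, since lexicographic order does not a priori coincide with order of values.
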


While this result is powerful and generalizes Zeckendorf's theorem to a large class of recurrence sequences, it is restrictive in that the leading term must have a positive coefficient. We examine what happens in general to existence and uniqueness of legal decompositions if $c_1=0$. Some generalizations were studied in \cite{CFHMN, CFHMNPX} on sequences called the $(s,b)$-Generacci sequences. In-depth analysis was done on the $(1,2)$-Generacci sequence, later called the Kentucky sequence,  and the Fibonacci Quilt sequence; the first has uniqueness of decomposition while the second does not.

\newpage

\begin{definition}\label{def:zlrrdefinition}
    We say a recurrence relation is an $s$-deep \textbf{Zero Linear Recurrence Relation (ZLRR)} if the following properties hold.
    \begin{enumerate}
        \item \emph{Recurrence relation:} There are non-negative integers $s, L, c_1, \dots, c_L$ such that
        \begin{equation}\label{eqn:zlrsrecurrence}
            G_{n+1}\ =\ c_1\, G_n + \cdots + c_s\, G_{n+1-s} + c_{s+1}\, G_{n-s}+ \cdots + c_L\, G_{n+1-L},
        \end{equation}
        with $c_1,\dots, c_s = 0$ and $L, c_{s+1}, c_L$ positive.
        \item \emph{No degenerate sequences:} Let $S =\{m  \mid c_m \neq 0\}$ be the set of indices of positive coefficients. Then $\gcd(S) = 1$.
    \end{enumerate}
\end{definition}

\begin{remark}
We impose the second restriction to eliminate recurrences with undesirable properties, such as $G_{n+1} = G_{n-1} + G_{n-3}$, where the odd- and even-indexed terms do not interact. Any sequence satisfying this recurrence splits into two separate, independent subsequences. Also note that $0$-deep ZLRR's are just PLRR's, whose sequences and decomposition properties are well-understood.
\end{remark}

\begin{definition}\label{def:sdeepzlrs}
    We say a sequence $\{G_n\}_{n\,=\,1}^{\infty}$ of positive integers arising from an $s$-deep ZLRR is an $s$-deep \textbf{Zero Linear Recurrence Sequence (ZLRS)} if $G_1 = 1$, $G_2 = 2$, \dots , $G_{s+1} = s+1$ and for $s+2 \leq n \leq L$,

    \begin{equation}\label{eqn:initcondzlrs}
        G_n = \begin{cases}n, & c_{s+1} \leq s, \\ c_{s+1}\,G_{n-s-1} + c_{s+2}\,G_{n-s-2} + \cdots + c_{n-1}\, G_1+1, & c_{s+1} > s. \end{cases}
    \end{equation}

    We call a decomposition $N=\sum_{i=1}^m a_i H_{m+1-i}$ of a positive integer and its associated sequence $\{a_i\}_{i=1}^m$ \textbf{legal}, if $a_i \geq 0$, and one of the following conditions hold:\vspace{1mm}
    \begin{itemize}
        \item \emph{Condition 1:} We have $a_1 = 1$ and $a_i = 0$ for $2 \leq i \leq m$.
        \item \emph{Condition 2:} We have $s<m<L$ and $a_i = c_i$ for $1\leq i\leq m$.
        \item \emph{Condition 3:} There exists $t \in \{s+1,\dots,L\}$ such that
        \[
            a_1\ =\ c_1,\ \ a_2\ =\ c_2,\ \ \dots,\ \ a_{t-1}\ =\ c_{t-1},\ \ a_t\ <\ c_t,
        \]
        $a_{t+1}, \dots, a_{t+\ell}=0$ for some $\ell \geq 0$, and $\{a_{s+\ell+i}\}_{i\,=\,1}^{m-t-\ell}$ is legal.
    \end{itemize}
Additionally, we let the empty decomposition be legal for $N=0$.
\end{definition}

The idea behind Condition 1 is if $N$ appears in the sequence, say $N= G_n$, then we allow this to be a legal decomposition. This is necessary for there to be a legal decomposition for $N=1$ for any $s$-deep ZLRS. 

\begin{remark}\label{rem:lagonacciexception}
    We note one special case for the initial conditions. If $Z_{n+1} = Z_{n-1} + Z_{n-2}$ (a recurrence relation we call the ``Lagonaccis'' as it has a similar recurrence relation to the Fibonaccis, but the terms ``lag'' behind and grow slowly), then $Z_1 = 1$, $Z_2 = 2$, $Z_3 = 4$, $Z_4 = 3$, $Z_5 = 6$, and so on.\footnote{We use $Z_n$ because the Lagonacci's are easy to study, with  interesting properties, usually requiring special attention. For an example of more standard behavior, consider $Y_{n+1} = 2Y_{n-1} + 2Y_{n-2}$, with $Y_1=1$, $Y_2 = 2$, $Y_3=3$, $Y_4 = 6$, \dots.}
\end{remark}

Similar to the initial conditions of a PLRS, we construct the initial conditions in such a way to guarantee existence of legal decompositions. The main idea behind the definition of legal decompositions is if $N$ does not appear in the sequence (i.e., $N \neq G_n$ for any $n\in\mathbb{N}_0$), then for some $m\in\mathbb{N}_0$, $G_m \leq N < G_{m+1}$,\footnote{Note that if $4 \leq N < 3$, then $N$ is not an integer, so we reach no contradiction with the special initial condition case.} and we \textbf{cannot} use $G_m, G_{m-1},\dots,G_{m-s+1}$ in the decomposition of $N$. Let us illustrate this with an example.

\begin{example}\label{ex:lagonaccidecomp10}
    Consider again the Lagonacci sequence $Z_{n+1} = Z_{n-1} + Z_{n-2}$, with the first terms
    \[
        1,\ 2,\ 4,\ 3,\ 6,\ 7,\ 9,\ 13,\ 16,\ \dots,
    \]
    and let us decompose $N = 10$. Since $Z_7 = 9 \leq 10 < 13 = Z_8$, we \emph{\textbf{cannot}} use $Z_7=9$ in its decomposition. So, we use the next largest number, $Z_6=7$, and get $10 = 7+3 = Z_6 + Z_4$. This is a legal 1-deep ZLRS decomposition. However, notice that we can also have $10 = 6+4 = Z_5 + Z_3$.
\end{example}

The above example suggests the following questions. \emph{Is uniqueness of decomposition lost for all ZLRSes? If so, is it lost for finitely many numbers? For infinitely many numbers? For all numbers from some point onward?}\\

We offer two approaches to addressing these questions. This paper focuses on generalizing Zeckendorf's theorem to $s$-deep ZLRSes, while \cite{MMMMS2} converts $s$-deep ZLRR's to PLRR's with the \textit{Zeroing Algorithm}.

\subsection{Main Results}\label{sec:mainresults}

This paper presents the first approach which is summarized immediately below in Theorems \ref{thm:generalzeckZLRS} and \ref{thm:lossofuniqueness1ZLRS} along with Conjectures \ref{conj:nonunique} and \ref{conj:unique}. The proofs of the theorems are presented in Section \ref{sec:zlrslegal}. Section \ref{sec:conclusion} concludes the paper with
open questions for future research.

\begin{theorem}[Generalized Zeckendorf's theorem for $s$-deep ZLRSes]\label{thm:generalzeckZLRS}
    Let $\{G_n\}_{n\,=\,1}^{\infty}$ be an $s$-deep \emph{Zero Linear Recurrence Sequence.} Then there exists a legal decomposition for each non-negative integer $N\geq 0$.
\end{theorem}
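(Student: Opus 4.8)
The plan is to prove existence constructively, via a greedy algorithm whose correctness I establish by strong induction on $N$. The non-negative integers split into three regimes according to the index $m$ with $G_m \le N < G_{m+1}$: the ``small'' regime $m \le s$, the initial-condition regime $s < m < L$, and the genuinely recursive regime $m \ge L$. The engine is a single cascade that peels off a legal leading block and recurses on a strictly smaller remainder.

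First I would dispose of the easy cases. For $N=0$ the empty decomposition is legal, and whenever $N = G_n$ for some $n$, Condition 1 gives a legal decomposition; in particular the initial conditions $G_1=1,\dots,G_{s+1}=s+1$ of Definition \ref{def:sdeepzlrs} show that every $1 \le N \le s+1$ is itself a term, so the small regime is immediate. I may therefore assume $N$ is not a term. Before invoking the greedy step I need the sequence to be eventually strictly increasing, so that the interval containing $N$ is well defined; this is where the hypothesis $\gcd(S)=1$ enters, since by a Perron--Frobenius argument the characteristic polynomial of \eqref{eqn:zlrsrecurrence} then has a single dominant real root exceeding $1$, whence $G_n \to \infty$ monotonically from some point on. The finitely many small or non-monotone indices lie in the regime $s<m<L$ and I would dispatch them by hand, reading legal decompositions directly off \eqref{eqn:initcondzlrs} together with Condition 2 (the initial conditions are designed precisely so this succeeds).

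The heart of the proof is the inductive step for $m \ge L$, where \eqref{eqn:zlrsrecurrence} holds as an exact identity $G_{m+1} = c_{s+1}G_{m-s} + c_{s+2}G_{m-s-1} + \cdots + c_L G_{m+1-L}$, the coefficients $c_1,\dots,c_s$ dropping out. Since $N < G_{m+1}$, I set the forbidden top coefficients $a_1 = \cdots = a_s = 0$ and march down the terms $G_{m-s}, G_{m-s-1},\dots$, maintaining the invariant $R_{i-1} < \sum_{j\ge i} c_j\, G_{m+1-j}$ on the running remainder. At each position $i \ge s+1$ I take $a_i = \min\bigl(c_i,\ \lfloor R_{i-1}/G_{m+1-i}\rfloor\bigr)$: either $a_i = c_i$ and the invariant propagates, or the floor is strictly below $c_i$ and I stop at this ``drop index'' $t$. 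Evaluating the invariant at $i=L$ forces $R_{L-1} < c_L G_{m+1-L}$, so a drop must occur with $t \in \{s+1,\dots,L\}$, leaving $a_t < c_t$ and remainder $R < G_{m+1-t}$. Because at least one copy of $G_{m-s}$ was subtracted, $R < N$, so strong induction furnishes a legal decomposition of $R$ whose leading term sits at index $\le m-t$; concatenating the matched block $a_{s+1}=c_{s+1},\dots,a_{t-1}=c_{t-1},\,a_t<c_t$, a (possibly empty) run of zeros, and this legal tail yields exactly the recursive shape of Condition 3.

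I expect the main obstacle to be verifying that the concatenation is genuinely legal rather than merely correct in value: I must confirm that the forced top-$s$ zeros never obstruct representing $N$, that the gap length $\ell = (m+1-t) - (\text{index of the tail's leading term}) - 1$ is indeed non-negative so the zeros-then-legal-tail pattern of Condition 3 is met, and above all that the regimes meet cleanly. The delicate seam is the base regime $s<m<L$, where $G_{m+1}$ is an initial-condition value rather than a recurrence value, so the clean inequality $N < \sum_i c_i G_{m+1-i}$ underpinning the invariant is unavailable and legality must instead be extracted directly from \eqref{eqn:initcondzlrs}. Making eventual monotonicity precise from $\gcd(S)=1$ is a second, more technical point that I would isolate as a preliminary lemma rather than fold into the induction.
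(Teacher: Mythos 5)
Your proposal is correct and takes essentially the same route as the paper: both run the greedy algorithm and prove termination by strong induction on $N$, treating the initial-condition range (and the Lagonacci special case) separately, and in the inductive step peeling off the maximal legal leading block and showing the remainder drops below $G_{m+1-t}$ so that Condition 3 plus the induction hypothesis finishes. Your invariant $R_{i-1}<\sum_{j\ge i}c_j\,G_{m+1-j}$ is a cleaner packaging of the paper's ``otherwise we have not used the maximum number of $G_{t-s-i}$'s'' contradiction, and your insistence on first establishing eventual monotonicity from $\gcd(S)=1$ addresses a point the paper leaves implicit rather than changing the argument.
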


\begin{theorem}[Loss of Uniqueness of Decomposition for a family of $s$-deep ZLRSes]\label{thm:lossofuniqueness1ZLRS}
    Let $\{G_n\}_{n\,=\,1}^{\infty}$ be an $s$-deep \emph{Zero Linear Recurrence Sequence} such that $c_{s+1}>s$, $c_{s+2}>0$ and $c_L>1$. Then uniqueness of decomposition is lost for at least one positive integer $N$.
\end{theorem}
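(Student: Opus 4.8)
The plan is to establish non-uniqueness by exhibiting a single positive integer with two distinct legal decompositions; by Theorem~\ref{thm:generalzeckZLRS} existence is never the issue, so producing any second legal decomposition of some $N$ suffices. The natural candidate is $N=G_n$ for a suitable index $n$. On one hand, Condition~1 of Definition~\ref{def:sdeepzlrs} makes the single summand $\{G_n\}$ legal. On the other hand, I would produce a second, ``spread-out'' legal decomposition of $G_n$ whose largest summand is at most $G_{n-s}$ --- that is, one that avoids the top $s$ terms $G_n,\dots,G_{n-s+1}$ altogether, exactly as in the intuition recorded after Definition~\ref{def:sdeepzlrs} and illustrated in Example~\ref{ex:lagonaccidecomp10}. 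Since such a decomposition cannot use $G_n$, it is automatically distinct from $\{G_n\}$, and the two together witness loss of uniqueness at $N=G_n$.

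To build the second decomposition I would run the greedy process on $G_n$ but forbid the top $s$ terms from the outset, so that the largest available summand is $G_{n-s}$. The content of the argument is then that this process terminates at $0$ --- i.e.\ that $G_n$ is at most the largest value representable using only $G_1,\dots,G_{n-s}$ --- and that the resulting coefficient string is legal. I would quantify the reach by analyzing the maximal legal decomposition supported on $G_1,\dots,G_{n-s}$: taking the leading coefficient to be the borderline value $c_{s+1}$ and breaking as late as possible, its value is, up to a lower-order recursive tail, $c_{s+1}G_{n-s}+c_{s+2}G_{n-s-1}+\cdots+c_{L-1}G_{n-L+2}+(c_L-1)G_{n-L+1}=G_{n+1}-G_{n-L+1}$, which exceeds $G_n$ for large $n$.

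Here is where the three hypotheses enter, and tracking them is the heart of the proof. The assumption $c_{s+2}>0$ is precisely what makes a leading coefficient equal to $c_{s+1}$ legal: in Condition~3 one then breaks one slot later, with $a_{s+2}=0<c_{s+2}$, so the borderline string is admissible and the reach is as large as claimed. The assumption $c_L>1$ guarantees the bottom coefficient $c_L-1$ is still positive, so the ``break at the last slot'' step stays legal while preserving reach. Finally $c_{s+1}>s$ forces the second (non-degenerate) case of the initial conditions~\eqref{eqn:initcondzlrs} and, through the induced growth, is what pushes the reach strictly past $G_n$ even after accounting for the initial segment; weakening any one of these is what can let the uniqueness examples (such as the Kentucky sequence) survive.

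The step I expect to be the main obstacle is verifying legality of the constructed second decomposition directly against Definition~\ref{def:sdeepzlrs}, whose Condition~3 is recursive and delicate: one must prepend the correct number of leading zeros so that the $s$-fold zero block $c_1=\cdots=c_s=0$ is matched, handle the borderline equality $a_t=c_t$ versus the strict inequality $a_t<c_t$ at the break, and confirm that the recursive tail is itself legal. I anticipate a short case split according to whether the second decomposition stays within the range where \eqref{eqn:zlrsrecurrence} governs the sequence or reaches down into the initial terms $G_1,\dots,G_L$ governed by \eqref{eqn:initcondzlrs}, together with an induction to handle the recursive tail, with the base indices $n$ near $L$ checked by hand. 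A viable fallback, should the explicit string prove unwieldy, is a range-overlap argument: show that the value intervals covered by legal decompositions with largest term $G_{n-s}$ and with largest term $G_{n-s-1}$ overlap, forcing some integer to be representable in two ways.
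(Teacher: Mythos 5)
Your construction is genuinely different from the paper's: you take $N=G_n$ and play the one-summand decomposition of Condition~1 against a greedy decomposition supported on $G_1,\dots,G_{n-s}$, whereas the paper picks an $N$ strictly between $G_{L+s+2}$ and $G_{L+s+3}$ and exhibits two decompositions that agree down to the $G_{s+4}$ slot and differ only in whether $G_{s+3}$ is taken as a summand (with a small remainder $x$ decomposed separately) or merged into the tail as $(G_{s+3}+x)$. Your route is plausible, but as written it has a genuine gap at exactly the point where the paper does its real work. Everything rests on the quantitative claim that the reach of legal decompositions supported on $G_1,\dots,G_{n-s}$ gets past $G_n$; you assert this via ``$c_{s+1}G_{n-s}+\cdots+(c_L-1)G_{n-L+1}=G_{n+1}-G_{n-L+1}$, which exceeds $G_n$ for large $n$.'' That inequality, $G_{n+1}-G_n>G_{n+1-L}$, is not a soft asymptotic fact: in terms of the principal root $r$ it amounts to $(r-1)\,r^{L-1}>1$, which already fails for the Lagonaccis (where $r\approx 1.3247$ gives $(r-1)r^{2}\approx 0.57$), so the leading-order term of the reach alone does not suffice for a general ZLRS and the claim cannot be waved through with ``for large $n$.'' It has to be extracted from the hypotheses $c_{s+1}>s$, $c_{s+2}>0$, $c_L>1$ by an actual computation, and that computation is precisely the paper's Lemma~\ref{lemma: lemma of lossofuniquenesstheorem}, proved by expanding the recurrence twice and checking signs coefficient by coefficient in three cases ($L=s+2$, $L=s+3$, $L\ge s+4$). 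Your proposal contains no substitute for this step; it is the theorem, not a routine estimate.

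A second, smaller debt is the legality check for the restricted greedy string against Definition~\ref{def:sdeepzlrs}, which you flag yourself and defer; the paper largely avoids it by writing down both decompositions explicitly and imposing $0<x<\min\{G_{s+3},G_{s+4}-G_{s+3}\}$ so that both strings are visibly of Condition~3 type. If you want to complete your version, the cleanest path is to specialize to $n=s+L+2$: Lemma~\ref{lemma: lemma of lossofuniquenesstheorem} states exactly that $G_{s+L+2}<c_{s+1}G_{L+2}+c_{s+2}G_{L+1}+\cdots+c_{L-1}G_{s+4}+G_{s+3}$, i.e., that $G_{s+L+2}$ lies strictly below a legally representable value supported on $G_{L+2},\dots,G_{s+3}$, so the restricted greedy cascade on $G_{s+L+2}$ breaks before exhausting that prefix and Theorem~\ref{thm:generalzeckZLRS} handles the remainder. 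At that point your argument and the paper's are consuming the same lemma, merely packaged around a different $N$; your ``range-overlap'' fallback is in fact the closer match to what the paper actually does.
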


We also have some conjectures relating to uniqueness of decompositions for $s$-deep ZLRSes, based on empirical evidence.

\begin{conjecture}\label{conj:nonunique}
  Let $\{G_n\}_{n\,=\,1}^{\infty}$ be an $s$-deep \emph{Zero Linear Recurrence Sequence} such that $c_{s+1}>s$. Then uniqueness of decomposition is lost for at least one positive integer $N$.
\end{conjecture}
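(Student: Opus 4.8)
The plan is to show that for every $s$-deep ZLRS with $c_{s+1} > s$ one can exhibit a single positive integer $N$ admitting two distinct legal decompositions, thereby breaking uniqueness. My starting point is Theorem \ref{thm:lossofuniqueness1ZLRS}, which already establishes this when the extra hypotheses $c_{s+2} > 0$ and $c_L > 1$ hold; the task is to remove these two hypotheses while retaining only $c_{s+1} > s$. The engine behind every such construction is the slack created by $c_{s+1} \ge s+1$: a legal decomposition may carry up to $c_{s+1}-1 \ge s$ copies of a term before Condition 3 of Definition \ref{def:sdeepzlrs} forces the full recurrence pattern, and the $s$ ``inaccessible'' top terms created by the leading zero coefficients give exactly the room needed to reroute a summand.

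Concretely, I would fix a large index $n$ and compare two digit strings for a common value $N$, both built from a single application of the recurrence $G_n = c_{s+1}G_{n-s-1} + \cdots + c_L G_{n-L}$. The first (``compressed'') decomposition keeps a factor on a high term; the second (``expanded'') pushes that factor down through the length-$s$ gap and continues with the recurrence coefficients. For each string I would verify legality through Condition 3, the crux being to locate an index $t$ with $a_t < c_t$: in the compressed string this strict inequality is supplied by a coefficient sitting one below $c_{s+1}$ (legal since $c_{s+1}-1 \ge s \ge 1$), while in the expanded string it must be supplied by lowering some later coefficient by one and absorbing the deficit further down. This is precisely where $c_{s+2}>0$ and $c_L>1$ enter the proof of Theorem \ref{thm:lossofuniqueness1ZLRS}: they guarantee a coefficient that can be decremented while the remainder stays legal.

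To reach the conjecture I would treat the two missing regimes separately. When $c_{s+2}=0$, the coefficient just past the gap can no longer be decremented, so I would instead route the ambiguity through the smallest index $j>s+1$ with $c_j>0$ (which exists since $c_L>0$ and $L\ge s+2$, and which together with $s+1\in S$ realizes $\gcd(S)=1$), using the surplus $c_{s+1}-1\ge s$ to pad the intervening zero positions legally. When $c_L=1$, decrementing $c_L$ collapses that summand entirely; here I would shift the decrement to an interior coefficient, or, failing a convenient one, exploit $\gcd(S)=1$ to produce a collision among low-order terms, checked against the special initial segment $G_1=1,\dots,G_{s+1}=s+1$ of Definition \ref{def:sdeepzlrs}. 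In each case the verification reduces to confirming that both strings satisfy Condition 3 and evaluate to the same $N$, a finite if fiddly check.

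The main obstacle is uniformity: producing one construction, or a small finite family of constructions, whose legality on both sides can be certified for \emph{every} coefficient pattern with $c_{s+1}>s$, rather than case by case. The proof of Theorem \ref{thm:lossofuniqueness1ZLRS} leans on $c_{s+2}>0$ and $c_L>1$ exactly to hand it a decrementable coefficient in the right place, and when both handles are removed the second decomposition becomes delicate to keep legal, with the special initial conditions interfering precisely for the small values where collisions are easiest to locate. An appealing alternative, possibly cleaner, is to pass through the companion paper's conversion of an $s$-deep ZLRR to a PLRR of the same growth rate via the Zeroing Algorithm and to transport a non-uniqueness witness across that correspondence; ensuring that transport preserves legality in both directions would itself be the heart of the matter.
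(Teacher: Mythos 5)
The statement you are trying to prove is Conjecture \ref{conj:nonunique}, which the paper itself does not prove: it is stated on the basis of empirical evidence and is explicitly listed as an open question in the concluding section. The only result the paper actually establishes in this direction is Theorem \ref{thm:lossofuniqueness1ZLRS}, which needs the extra hypotheses $c_{s+2}>0$ and $c_L>1$. So there is no ``paper's own proof'' to compare against, and the burden on you is to supply a genuinely new argument covering the cases $c_{s+2}=0$ and $c_L=1$.

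Your proposal does not do this. It is a plan, and at the two points where the plan would have to become a proof it stops: for $c_{s+2}=0$ you say you ``would instead route the ambiguity through the smallest index $j>s+1$ with $c_j>0$,'' and for $c_L=1$ you say you ``would shift the decrement to an interior coefficient, or, failing a convenient one, exploit $\gcd(S)=1$,'' but in neither case do you write down the integer $N$, the two digit strings, or the verification that both strings are legal under Condition 3 of Definition \ref{def:sdeepzlrs} and sum to the same value. These are exactly the places where the hypotheses $c_{s+2}>0$ and $c_L>1$ do real work in the paper's Theorem \ref{thm:lossofuniqueness1ZLRS}: the quantity $G_{s+3}+x$ must stay below $\min\{2G_{s+3},G_{s+4}\}$ (which uses $c_L>1$ via $2G_{s+3}\le c_LG_{s+3}$), and Lemma \ref{lemma: lemma of lossofuniquenesstheorem}'s sign analysis produces the coefficient $1-c_{s+2}$, which is non-positive only because $c_{s+2}>0$. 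Removing either hypothesis breaks the specific inequality chain, and your sketch offers no replacement inequality, only the assertion that one should exist. The alternative route through the Zeroing Algorithm of the companion paper is likewise speculative: that conversion matches growth rates of recurrences, but no correspondence between legal decompositions on the two sides is established anywhere, and preserving legality under such a transport is, as you yourself note, ``the heart of the matter.'' As it stands the proposal identifies the right obstacles but does not overcome them, so the conjecture remains open.
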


\begin{conjecture}\label{conj:unique}
  Let $\{G_n\}_{n\,=\,1}^{\infty}$ be an $s$-deep \emph{Zero Linear Recurrence Sequence} with recurrence relation
  \[
    G_{n+1} = G_{n-2}+c\,G_{n-3},
  \] with $c \geq 4$. Then there exists a unique decomposition for each positive integer $N$.
\end{conjecture}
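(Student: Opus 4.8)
The plan is to prove uniqueness by strong induction on $N$, showing that in any legal decomposition the largest summand, and then each successive coefficient, is forced. Write the recurrence as $G_{n+1}=G_{n-2}+c\,G_{n-3}$, so that $s=2$, $L=4$, $c_3=1$ and $c_4=c$; existence of a legal decomposition is already guaranteed by Theorem~\ref{thm:generalzeckZLRS}, so it suffices to show there is at most one. The engine is a \emph{maximal legal sum} estimate: let $W_k$ denote the largest value of a legal decomposition whose largest summand is at most $G_k$. Unwinding Condition~3 of Definition~\ref{def:sdeepzlrs}, such a decomposition with top summand $G_k$ is a block $G_k+a\,G_{k-1}$ with $0\le a\le c-1$ followed by a legal tail whose largest summand is at most $G_{k-2}$; the forced pair of leading zeros $c_1=c_2=0$ in front of every block means a \emph{deep} tail must begin at least two indices lower, so the tail contributes at most $\max(G_{k-2},W_{k-4})$. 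This gives $W_k\le G_k+(c-1)G_{k-1}+\max(G_{k-2},W_{k-4})$, and since $G_{k-2}<G_{k-1}$ and, inductively, $W_{k-4}<G_{(k-4)+3}=G_{k-1}$, we conclude $W_k<G_k+c\,G_{k-1}=G_{k+3}$ for every $k$.

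Granting this estimate, the inductive step is clean whenever $N$ is strictly between consecutive terms. If $G_m<N<G_{m+1}$, then Condition~3 forces the coefficients of $G_m$ and $G_{m-1}$ to vanish, while the estimate with $k=m-3$ shows that any legal decomposition avoiding $G_{m-2}$ would have value at most $W_{m-3}<G_m\le N$; hence $G_{m-2}$ must appear, necessarily with coefficient $1$. The coefficient $a$ of $G_{m-3}$ is then pinned down as the unique element of $\{0,\dots,c-1\}$ for which $N-G_{m-2}-a\,G_{m-3}$ lies in $[0,G_{m-3})$, the upper bound coming again from $W_{m-6}<G_{m-3}$ applied to the tail. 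The resulting remainder is a strictly smaller nonnegative integer, to which the induction hypothesis applies. This reduces the entire problem to the case in which $N$ itself equals a sequence term.

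The hard part is precisely this sequence-term case $N=G_m$. Condition~1 already furnishes the decomposition $N=G_m$, and everything rests on showing that no deep decomposition, with top summand $G_{m-2}$ governed by Condition~3, can \emph{also} equal $G_m$; equivalently, that $G_m-G_{m-2}$ admits no legal tail with largest summand at most $G_{m-3}$ of the required shape. This is where the hypothesis $c\ge 4$ must do its work, since the analogous comparison behaves differently for small $c$, and it is also where the finitely many initial terms $G_1,\dots,G_4=1,2,3,4$ must be treated by hand, because the inequalities defining $W_k$ degenerate for small indices. I expect the decisive step to be a strict comparison of $G_m-G_{m-2}$ against the admissible values $a\,G_{m-3}+(\text{legal tail below }G_{m-3})$ with $0\le a\le c-1$, propagated down the sequence via the recurrence and anchored in the initial-condition range; establishing this strictness uniformly in $m$, and reconciling it with the boundary behavior, is where I anticipate the genuine difficulty of the proof to lie. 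Once the sequence-term case is settled, the strong induction closes and uniqueness follows.
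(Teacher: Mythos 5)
First, a point of order: the paper does not prove this statement. It is Conjecture~\ref{conj:unique}, offered explicitly ``based on empirical evidence'' and listed again in Section~\ref{sec:conclusion} among the open questions, so there is no proof of record to compare you against; a correct argument here would be a new result. Your write-up, however, is a plan rather than a proof: you yourself defer the case $N=G_m$ as the place ``where I anticipate the genuine difficulty of the proof to lie,'' and nothing in the proposal resolves it. Your maximal-sum estimate $W_k<G_{k+3}$ is correctly set up and is the right tool for the between-terms case, but the induction does not close as written.

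The more serious problem is that the deferred case does not look merely hard --- under Definition~\ref{def:sdeepzlrs} as stated it appears to fail outright. Take $c=4$, so the sequence begins $1,2,3,4,6,11,16,22,\dots$. The coefficient string $(0,0,1,1,1)$ (coefficients of $G_5,\dots,G_1$) satisfies Condition~3 with $t=4$: the leading entries $0,0,1$ match $c_1,c_2,c_3$, the next coefficient is $1<c_4=4$, and the tail $(1)$ is legal by Condition~1. It represents $G_3+G_2+G_1=3+2+1=6=G_5$, so $N=6$ already has two legal decompositions; more generally $G_5=2+c$ equals $G_3+\lfloor (c-1)/2\rfloor\,G_2$ plus possibly one $G_1$ for every $c\ge4$. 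So either the conjecture is false as literally stated, or ``legal'' is meant to carry an extra restriction (presumably the intended rule that a summand $G_j$ with $G_j\le N'$ for the relevant remainder $N'$ may only enter via Condition~1) that Definition~\ref{def:sdeepzlrs} does not actually impose; you must settle that before the sequence-term case can even be formulated. A related gap infects the step you call clean: for $G_m<N<G_{m+1}$ you assert Condition~3 ``forces the coefficients of $G_m$ and $G_{m-1}$ to vanish,'' but Condition~3 only constrains the leading entries of the coefficient string as written, and the definition never ties the string's length to the size of $N$. For instance $(0,0,1,0,0,0,0,0,0,1)$, i.e.\ $G_8+G_1=22+1=23$, is legal by the letter of Condition~3 even though $G_8\le 23$, and it coexists with the greedy decomposition $G_6+2G_5=23$. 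Your bound $W_k<G_{k+3}$ only controls decompositions whose top summand you have already forced down to $G_{m-2}$; it does not by itself exclude top summand $G_m$ with second coefficient $0$ followed by a small tail. Both halves of the induction therefore have genuine holes, and the examples above suggest the statement itself needs repair before any proof can exist.
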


The proof for Theorem \ref{thm:generalzeckZLRS} is a mostly straightforward strong induction proof. The difficulty arises with the initial conditions, which are split into two cases. Theorem \ref{thm:lossofuniqueness1ZLRS} is proved by finding an integer with at least two decompositions for this family of $s$-deep ZLRSes.

\section{ZLRS-Legal Decompositions}\label{sec:zlrslegal}

We prove Theorems \ref{thm:generalzeckZLRS} and \ref{thm:lossofuniqueness1ZLRS} in Sections \ref{sec:existence} and \ref{sec:uniqueness}, respectively.

\subsection{Existence}\label{sec:existence}

Given an $s$-deep ZLRS, we use a strong inductive argument to show that a greedy-type algorithm always terminates in a legal decomposition. However, we need to make sure the decomposition is legal. Therefore, at each step, we use the largest coefficient possible, depending on the coefficients of the given $s$-deep ZLRS, and making sure we do not have more terms than is legal. We first illustrate this with an example.

\begin{example}\label{ex:decomposition example}
Consider the 1-deep ZLRS $G_{n+1} = 2\,G_{n-1}+2\,G_{n-2}$, which has initial conditions $G_1=1, G_2=2, G_3=3$. The first few terms of this sequence are \[G_4 = 6,\  G_5 = 10,\  G_6= 18,\  G_7=32,\  G_8 = 56,\  G_9 = 100,\  G_{10}=176.\] Let us decompose $N=164$ using the greedy algorithm. Since $G_9 = 100 < 164 < 176=G_{10}$, and $s=1$, we must use $G_8 = 56$ in the decomposition. Since $c_1 = c_2 = 2$, we can use $G_8$ a maximum of two times, and $G_7$ a maximum of one time, which gives us a total of $m=2\,G_8 + G_7 = 2*56+32 = 144$.\\

We must now decompose $N-m = 164-144=20$ with the remaining terms, $G_1,\dots,G_6$. Since $G_6=18 < 20 < 32=G_7$, we can repeat the same process as before, and ``add" the decomposition of 20, which is $20 = 2*10 = 2\,G_5$, into the decomposition of 164. We get \[164 = 2*56 + 32 + 2*10 = 2\,G_8+G_7+2\,G_5.\] Notice that $G_7$ can not be legally used in the decomposition of 20, because the legal decomposition requirements only allow $G_7$ to be used at most once. It is therefore absolutely necessary for $20 < G_7$.
\end{example}

\begin{proof}[Proof of Theorem \ref{thm:generalzeckZLRS}]
By Definition \ref{def:zlrrdefinition} the $s$-deep ZLRS has the form of \eqref{eqn:zlrsrecurrence}. We first prove that the greedy algorithm terminates in a legal decomposition for all integers $N$ up to and including the last initial condition. For a base case for the induction argument we must consider the following 3 cases. Note that case 3 only applies to a specific sequence. For the base cases we use the maximum number of $G_t$s possible consistent with legality.\\

Case 1: If $c_{s+1} \leq s$, then the initial conditions are the first $L$ integers. So, by Condition (1), we trivially have a legal decomposition for all of the initial conditions. \\ \

Case 2: If $c_{s+1} > s$, then the initial conditions are specially constructed so that we guarantee existence of legal decompositions. We do so by adding the smallest integer that cannot be legally decomposed by the previous terms. We illustrate this with an example.
\begin{example}
Let us take 1-deep ZLRSes of the form $G_{n+1} = c_1\, G_{n-1} + c_2 \,G_{n-2}$, where $c_1 > 1$ and $c_2 > 0$. The initial conditions start with $G_1 = 1$ and $G_2 = 2$. Assuming $G_3 > G_2$, we know all $N$ with $G_2 < N < G_3$ cannot use $G_2 = 2$ in their decomposition, so we can only use $G_1=1$. We also have a restriction of only being able to use $G_1=1$ at most $c_1$ times. So, the first number we cannot legally decompose is $c_1+1$, thus, $G_3 = c_1+1$, which comes by construction as well. By a similar argument, $G_4 = 2c_1 + c_2 + 1$.
\end{example}

Case 3 (Special): If the ZLRS is the Lagonaccis, then we must consider the first four terms in the sequence instead of the first three terms. However, since all four integers appear in the sequence ($Z_1=1, Z_2=2, Z_3=4,$ and $Z_4=3$), we still get a trivial legal decomposition for the first four positive integers.\\
\

For the inductive step of the proof we assume that all integers up to and including $N-1$ have a legal decomposition. We must show that $N$ must also have a legal decomposition. Let $G_t \leq N < G_{t+1}$. Two cases must be considered.\\

Case 1: Suppose $N = G_{t}$. Then, trivially, we have a legal decomposition. \\ \

Case 2: Suppose $N > G_{t}$ and let $m \leq N$ be the largest integer decomposed using a legal decomposition involving only summands drawn from $G_t,G_{t-1}, \dots, G_{t-L}$. Suppose $m = a_1G_{t-s-1}$, with $a_1 < c_{s+1}$. To complete the proof, we need to show that $N - m$ can be expressed with the remaining terms. If you recall Example \ref{ex:decomposition example}, it was important that $20<G_7$ so that we could decompose it with the remaining terms. Generalizing from that example, we need $N-m < G_{t-s-1}$. Suppose that does not hold, so instead we have $N-m \geq G_{t-s-1}$. However, this implies that we have not used the maximum number of $G_{t-s-1}$'s in the greedy decomposition, which is a contradiction. So, we now have that $N-m<G_{t-s-1}$. By the strong inductive hypothesis, there exists a legal decomposition of $N-m$. We then add $m$ to this legal decomposition  to obtain the decomposition of $N$. Since the decomposition for $N-m$ is legal, adding $m$ keeps the decomposition legal, by Condition (3) of Definition \ref{def:sdeepzlrs}. Thus we have a legal decomposition for $N$.

\ \ \ \ \ Let $c_i$ be the next non-zero constant in the recurrence relation. We then let $m = c_{s+1}G_{t-s-1} + a_iG_{t-s-i}$ with $a_i < c_{s+i}$. We want to show that $N - m$ can be expressed with the remaining terms. To do so, we need $N-m < G_{t-s-i}$. Suppose not. Then $N-m \geq G_{t-s-i}$. However, this implies that we have not used the maximum number of $G_{t-s-i}$'s in our greedy decomposition, which is a contradiction. So, we have that $N-m<G_{t-s-i}$. By the same reasoning as the previous case, we have a legal decomposition for $N$.

\ \ \ \ \ We continue this argument, taking the next non-zero constant and adding that on to $m$, until we reach this final case.

\ \ \ \ \ Let $m = c_1\,G_{t} + c_2 \,G_{t-1} + \cdots + c_{L-1} \,G_{t+2-L} + (c_L - 1)\,G_{t+1-L}$. This is the largest possible value $m$ can attain with an allowable legal decomposition. We want to show that $N-m < G_{t-L+1}$. Noting $N < G_{t+1}$, we see that
\begin{align}
    N - m &\ = \  N - \pn{c_1\,G_{t} + \cdots + c_{L-1} G_{t+2-L} + (c_L - 1)\,G_{t+1-L}}\nonumber\\
    &\ <\  G_{t+1} -  \pn{c_1\,G_{t} + \cdots + c_{L-1} G_{t+2-L} + (c_L - 1)\,G_{t+1-L}}\nonumber\\
    &\ = \  \pn{c_1\,G_{t} + \cdots + c_{L-1}\, G_{t+2-L} + c_L\, G_{t+1-L}} \nonumber \\
    &\hspace{3cm}-  \pn{c_1\,G_{t} + \cdots + c_{L-1}\, G_{t+2-L} + (c_L - 1)\,G_{t+1-L}}\nonumber\\
    &\ = \  G_{t+1-L}.
\end{align}

\ \ \ \ \ Thus $N-m < G_{t+1-L}$, and in every case we attain a legal decomposition for $N$, as desired. Therefore, by strong induction, we attain a legal decomposition for any positive integer $N$ given a fixed $s$-deep ZLRS.
\end{proof}

\subsection{Loss of Uniqueness}\label{sec:uniqueness}
\begin{proof}[Proof of Theorem \ref{thm:lossofuniqueness1ZLRS}]
We need to show there exists an $N$ such that
\[
    N = c_{s+1}\,G_{L+2}+c_{s+2}\,G_{L+1} + \cdots + c_{L-1}\,G_{s+4}+G_{s+3}+x,
\]
satisfies $G_{L+s+2}<N<G_{L+s+3}$ and $G_{s+3}+x<\min\{2G_{s+3},G_{s+4}\}$. Note that the second condition implies $G_{s+3}<G_{s+3}+x<G_{s+4}$. If such an $N$ exists, it would have two legal decompositions. Namely, $c_{s+1}\,G_{L+2}+c_{s+2}\,G_{L+1} + \cdots + c_{L-1}\,G_{s+4}+G_{s+3}+(x)$ and $c_{s+1}\,G_{L+2}+c_{s+2}\,G_{L+1} + \cdots + c_{L-1}\,G_{s+4}+(G_{s+3}+x)$, where $(n)$ represents the legal decomposition of $n$. If suffices to show that such an $N$ exists. To complete the proof we need the following lemma, whose proof is given after the proof of Theorem \ref{thm:lossofuniqueness1ZLRS}.

\begin{lemma}\label{lemma: lemma of lossofuniquenesstheorem}
Let $\{G_n\}_{n\,=\,1}^{\infty}$ be an $s$-deep ZLRS with recurrence relation
\[
c_1\, G_n + \cdots + c_s\, G_{n+1-s} + c_{s+1}\, G_{n-s}+ \cdots + c_L\, G_{n+1-L},
\]
such that $c_{s+1}>s$, $c_{s+2}>0$ and $c_L>1$. Then
\[
G_{s+L+2}-c_{s+1}\,G_{L+2}-c_{s+2}\,G_{L+1} - \cdots - c_{L-1}\,G_{s+4}-G_{s+3}\ <\ 0.
\]
\end{lemma}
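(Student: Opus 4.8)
The plan is to eliminate $G_{s+L+2}$ using the recurrence and reinterpret the claim as a lower bound on a single gap between consecutive terms. Writing the recurrence as $G_{n+1} = c_{s+1}G_{n-s} + \cdots + c_L G_{n+1-L}$ (recall $c_1 = \cdots = c_s = 0$), the choice $n+1 = s+L+3$ gives
\[
G_{s+L+3} \ = \ c_{s+1}G_{L+2} + c_{s+2}G_{L+1} + \cdots + c_{L-1}G_{s+4} + c_L\, G_{s+3}.
\]
Hence the subtracted sum in the lemma is exactly $G_{s+L+3} - (c_L - 1)G_{s+3}$, and the asserted inequality is equivalent to
\[
G_{s+L+3} - G_{s+L+2} \ > \ (c_L - 1)\,G_{s+3}.
\]
So it suffices to show that the gap $D_{s+L+2} := G_{s+L+3} - G_{s+L+2}$ exceeds $(c_L-1)G_{s+3}$.

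Next I would work with the difference sequence $D_n := G_{n+1} - G_n$, which satisfies the same recurrence, so $D_{s+L+2} = c_{s+1}D_{L+1} + c_{s+2}D_L + \cdots + c_L D_{s+2}$. The hypothesis $c_{s+1} > s$ is what forces the relevant differences to be positive: from the initial data one computes $G_{s+2} = c_{s+1}+1$ and $G_{s+3} = 2c_{s+1}+c_{s+2}+1$ (with a harmless $-1$ correction to $G_{s+3}$ when $L = s+2$, where it is supplied by the recurrence rather than the initial formula), so that $D_{s+1} = c_{s+1}-s > 0$, $D_{s+2} = c_{s+1}+c_{s+2} > 0$, while $D_k = 1$ for $k \le s$. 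Together with the recurrence and the standard eventual geometric growth of such sequences, this yields $D_k > 0$ throughout the index range $s+1 \le k \le L+1$; the Lagonacci-type slow growth is excluded here precisely because $c_{s+1} > s$.

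To close, I would drop all but the first and last terms of the positive sum, bounding $D_{s+L+2} \ge c_{s+1}D_{L+1} + c_L D_{s+2}$. Since $c_L D_{s+2} = c_L(G_{s+3}-G_{s+2})$, a short rearrangement reduces the target to
\[
c_{s+1}\,D_{L+1} \ > \ c_L\,G_{s+2} - G_{s+3}.
\]
When $c_L G_{s+2} \le G_{s+3}$ the right side is nonpositive and the claim is immediate from $D_{L+1} > 0$. Otherwise I would expand $D_{L+1}$ itself through the recurrence down to the explicitly known small differences ($D_1 = \cdots = D_s = 1$, $D_{s+1} = c_{s+1}-s$, $D_{s+2} = c_{s+1}+c_{s+2}$) and substitute the closed forms for $G_{s+2}, G_{s+3}$; in the base instance $L = s+2$ this collapses after cancellation to a manifestly positive polynomial inequality in the coefficients $c_j$ (for $s=1$ it becomes $c_{s+1}(c_{s+1}^2 - c_{s+1}+2) > 0$), and for larger $L$ the additional positive summands only help.

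The main obstacle I anticipate is the bookkeeping in this last step rather than any single clever idea: I must unwind $D_{L+1}$ through possibly many initial-condition-defined terms, keep straight the boundary between terms given by the initial conditions and those given by the recurrence (which shifts the formula for $G_{s+3}$ and the identity of $c_L$ when $L = s+2$), and verify positivity of every $D_k$ used without circular appeals to monotonicity. The worked examples show the inequality can be extremely tight — for $G_{n+1} = 2G_{n-1} + 100\,G_{n-2}$ the two sides of the lemma's statement differ by only a small additive constant — so the estimates must be carried out exactly and cannot be replaced by asymptotic growth bounds.
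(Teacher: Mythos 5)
Your reformulation is correct and genuinely different from the paper's route: applying the recurrence at $n+1=s+L+3$ to rewrite the subtracted sum as $G_{s+L+3}-(c_L-1)G_{s+3}$, so the lemma becomes $G_{s+L+3}-G_{s+L+2}>(c_L-1)G_{s+3}$, and then expanding the difference sequence as $D_{s+L+2}=c_{s+1}D_{L+1}+c_{s+2}D_L+\cdots+c_LD_{s+2}$ with each $D_k>0$ (which does follow from $c_{s+1}>s$ by induction through the initial conditions). The paper never passes to differences; it expands everything through the recurrence and the initial conditions and checks the sign of the coefficient of each monomial $c_{s+i}c_{s+j}$, split into the cases $L=s+2$, $L=s+3$, $L\ge s+4$.

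However, there is a genuine gap at the step where you drop all but the first and last terms and reduce the target to $c_{s+1}D_{L+1}>c_L\,G_{s+2}-G_{s+3}$: that reduced inequality is false in general, so no amount of expanding $D_{L+1}$ can establish it. Take $s=1$, $L=4$, $G_{n+1}=2G_{n-1}+G_{n-2}+10^6G_{n-3}$ (so $c_{s+1}=2>s$, $c_{s+2}=1>0$, $c_L=10^6>1$). The sequence begins $1,\,2,\,3,\,6,\,1000008,\,2000015,\,5000022,\,11000038$, and the lemma holds with margin $22$ (indeed $G_7-2G_6-G_5-G_4=-22$); but $c_{s+1}D_{L+1}=2(G_6-G_5)=2000014$ while $c_L\,G_{s+2}-G_{s+3}=3\cdot10^6-6=2999994$, so your sufficient condition fails. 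The discarded middle term $c_{s+2}D_L=G_5-G_4=1000002$ is exactly what is needed to close the deficit. So the assertion that ``for larger $L$ the additional positive summands only help'' is where the argument breaks: for $L\ge s+3$ with $c_L$ large you must retain every term and prove $\sum_{i=1}^{L-s-1}c_{s+i}D_{L+2-i}>c_L\,G_{s+2}-G_{s+3}$, which (as your own tight examples already indicate) forces an exact expansion of all of those differences, not just $D_{L+1}$ --- at which point you are carrying out essentially the same bookkeeping as the paper, merely in difference coordinates. The reduction is only lossless in the base case $L=s+2$, where the sum has exactly two terms; that is why your $s=1$, $L=s+2$ computation closes while the general case does not.
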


We can now continue with the proof. We first show there exists an $x>0$ such that
\[
  G_{L+s+2}-c_{s+1}\,G_{L+2}-c_{s+2}\,G_{L+1} - \cdots - c_{L-1}\,G_{s+4}-G_{s+3}<x<\min\{G_{s+3},G_{s+4}-G_{s+3}\}.
\]
However, by Lemma \ref{lemma: lemma of lossofuniquenesstheorem}, we see that
\[
G_{L+s+2}-c_{s+1}\,G_{L+2}-c_{s+2}\,G_{L+1} - \cdots - c_{L-1}\,G_{s+4}-G_{s+3}\ \leq\ 0.
\]
So, we require $0 < x < \min\{G_{s+3},G_{s+4}-G_{s+3}\}$. Since $\min\{G_{s+3},G_{s+4}-G_{s+3}\}>1$, such an $x$ exists.

Since $c_L>1$, the condition $G_{s+3}+x<\min\{2G_{s+3},G_{s+4}\}<2G_{s+3}$ implies
\begin{align*}
    N &\ =\ c_{s+1}\,G_{L+2}+c_{s+2}\,G_{L+1} + \cdots + c_{L-1}\,G_{s+4}+G_{s+3}+x\\
    &\ <\ c_{s+1}\,G_{L+2}+c_{s+2}\,G_{L+1} + \cdots + c_{L-1}\,G_{s+4}+2G_{s+3}\\
    &\ \leq\ c_{s+1}\,G_{L+2}+c_{s+2}\,G_{L+1} + \cdots + c_{L-1}\,G_{s+4}+c_L\,G_{s+3}\\
    &\ =\ G_{L+s+3},
\end{align*}
implying such an $N$ exists. Thus, uniqueness of decompositions is not guaranteed for this family of $s$-deep ZLRSes.
\end{proof}

We conclude this section with a proof of Lemma \ref{lemma: lemma of lossofuniquenesstheorem}.

\begin{proof}[Proof of Lemma \ref{lemma: lemma of lossofuniquenesstheorem}]
There are three cases to consider.\\

Case 1: Suppose $L = s+2$. We need to show
\[
    G_{2s+4}-c_{s+1}\,G_{s+4}-G_{s+3} \ <  \ 0.
\]
Using the recursive definition on $G_{2s+4}$, we obtain
\[
    G_{2s+4}-c_{s+1}\,G_{s+4}-G_{s+3}\ =\ c_{s+1}\,G_{s+3}+c_{s+2}\,G_{s+2}-c_{s+1}\,G_{s+4}-G_{s+3}.
\]
Again, using the recursive definition of \eqref{eqn:zlrsrecurrence} and simplifying, we obtain
\[
   c_{s+1}\,G_{s+3}+c_{s+2}\,G_{s+2}-c_{s+1}\,G_{s+4}-G_{s+3}\ =\ (2-G_3)c_{s+1}^2-2c_{s+1},
\]
which is indeed negative, as $G_3>2$ for any $s$-deep ZLRS with $c_{s+1}>s$.\\

Case 2: Suppose $L = s+3$. We need to show
\[
G_{2s+5}-c_{s+1}\,G_{s+5}-c_{s+2}\,G_{s+4}-G_{s+3} \  <  \ 0.
\]
This case follows similar to Case 1 using the recursive definitions of the terms and simplifying, yielding
\[
    (3-H_4)c_{s+1}^2-2c_{s+1}c_{s+2}-c_{s+2}^2-2c_{s+1}-1+(1-c_{s+2})c_{s+3},
\]
which is negative since $c_{s+2}>0$ and $c_{s+1}>s$ implying  $H_4 \ge 4$.\\

Case 3: Suppose $L=s+m$ with $m\geq 4$. We need to show
\begin{equation}\label{eq:lossofuniquenessCase3}
G_{2s+m+2} - c_{s+1}\,G_{s+m+2}-c_{s+2}\,G_{s+m+1}-\cdots -c_{s+m-1}\,G_{s+4}-G_{s+3}\ <  \ 0.
\end{equation}
Using the recursive definition satisfied by the terms, we find \eqref{eq:lossofuniquenessCase3} is equivalent to
\begin{align*}
    &c_{s+1}\,G_{s+m+1} + c_{s+2}\,G_{s+m}+\cdots + c_{s+m-1}\,G_{s+3}+c_{s+m}\,G_{s+2}\\&-c_{s+1}\,G_{s+m+2}-c_{s+2}\,G_{s+m+1}-c_{s+3}\,G_{s+m}-\cdots-c_{s+m-1}\,G_{s+4}-G_{s+3}\\
    &= c_{s+1}\pn{\sum_{i=1}^m c_{s+i}G_{m+1-i}} + c_{s+2}\pn{1+\sum_{i=1}^{m-1}c_{s+i}G_{m-i}} + \cdots+ c_{s+m-x+2}\pn{1+\sum_{i=1}^{x-1}c_{s+i}G_{x-i}} \\
    &+\cdots+c_{s+m-1}\pn{1+\sum_{i=1}^2 c_{s+i}G_{3-i}} + c_{s+m}\pn{1+\sum_{i=1}^1 c_{s+i}G_{2-i}} - c_{s+1}\pn{\sum_{i=1}^m c_{s+i}G_{m+2-i}}\\
    &-c_{s+2}\pn{\sum_{i=1}^m c_{s+i}G_{m+1-i}} - c_{s+3}\pn{1+\sum_{i=1}^{m-1} c_{s+i}G_{m-i}} - \cdots -c_{s+m-y+3}\pn{1+\sum_{i=1}^{y-1}c_{s+i}G_{y-i}}\\
    &-\cdots-c_{s+m-1}\pn{1+\sum_{i=1}^3c_{s+i}G_{4-i}}-\pn{1+\sum_{i=1}^2c_{s+i}G_{3-i}},
\end{align*}
where $2\leq x \leq m$ and $4\leq y \leq m$. To show \eqref{eq:lossofuniquenessCase3} is negative, we consider the coefficients of the terms separately in sub-cases and prove each are non-positive.\\

Sub-case 1: Let us consider the coefficients of $c_{s+i}c_{s+j}$ for $1\leq i,j< m$. Assuming, without loss of generality, that $i\leq j$, we see that for all $m-i+2\leq j < m$, the coefficient of $c_{s+i}c_{s+j}$ is non-positive. If $j < m-i+2$, then we see that the coefficient of $c_{s+i}c_{s+j}$ is $G_{m+2-i-j}-G_{m+3-i-j}$. A simple inductive argument shows that, for $c_{s+1}\geq s$, $G_{n+1}\geq G_n$, implying $G_{m+2-i-j}-G_{m+3-i-j}<0$.\\

Sub-case 2: We now consider the case when each $c_{s+i}$, $1\le i < m$, is not multiplied by other coefficients. For $i=1$, simple examination shows there is no positive part to this coefficient, so it is non-positive. When $i=2$, the positive part is 1, but the negative part is $G_2=2>1$, so this coefficient is negative. For $3\leq i<m$, both the positive and negative part are 1, so the coefficient is zero. Thus, all coefficients of $c_{s+i}$ are non-positive for this range of $i$.\\

Sub-case 3: We finally consider the case where the coefficient of $c_{s+m}$ is non-positive. We see that this coefficient is
\[
    2G_1c_{s+1}-G_2c_{s+1}-G_1c_{s+2}+1\ =\ 1-c_{s+2},
\]
which is non-positive as $c_{s+2}>0$.
\end{proof}

\section{Conclusion and Future work}\label{sec:conclusion}

We have introduced a method to analyze decompositions arising from ZLRSes, raising many fruitful natural questions for future work.

\begin{itemize}
    \item Are Conjectures \ref{conj:nonunique} and \ref{conj:unique} true?\\ \

    \item What is required for an $s$-deep ZLRS to have unique decompositions? If Conjecture \ref{conj:nonunique} is true, we minimally need $c_{s+1}\leq s$. However there are families, such as the Lag-onaccis, with non-unique decompositions and $c_{s+1}\leq s$. What else is needed?  \\ \

    \item Can a stronger result regarding loss of uniqueness be obtained? At the moment, we have constructed some counterexamples. Do the number of decompositions grow exponentially in comparison to the terms of  sequence?
\end{itemize}


MSC2010: 11B39

\ \\


\begin{thebibliography}{CFHMNPX} 


\bibitem[BBGILMT]{BBGILMT}
O. Beckwith, A. Bower, L. Gaudet, R. Insoft, S. Li, S. J. Miller and P. Tosteson, \emph{The Average Gap Distribution for Generalized Zeckendorf Decompositions}, Fibonacci Quarterly \textbf{51} (2013), 13--27.

\bibitem[BM]{BM}
I. Ben-Ari, S. Miller, \emph{A Probabilistic Approach to Generalized Zeckendorf Decompositions}, SIAM Journal on Discrete Mathematics, \textbf{30} (2016), no. 2, 1302-1332.

\bibitem[BCCSW]{BCCSW}
E. Burger, D.C. Clyde, C.H. Colbert, G.H. Shin, Z. Wang \emph{A generalization of a theorem of Lekkerkerker to Ostrowski's decomposition of natural numbers}
Acta Arithmetica, \textbf{153} (2012), pp. 217-249.

\bibitem[CFHMN]{CFHMN}
M. Catral, P. Ford, P. E. Harris, S. J. Miller, and D. Nelson, \emph{Generalizing Zeckendorf's Theorem: The
Kentucky Sequence}, Fibonacci Quarterly \textbf{52} (2014), no. 5, 68-90.

\bibitem[CFHMNPX]{CFHMNPX}
M. Catral, P. Ford, P. E. Harris, S. J. Miller, D. Nelson, Z. Pan and H. Xu, \emph{New Behavior in Legal Decompositions Arising from Non-positive Linear Recurrences}, Fibonacci Quarterly \textbf{55} (2017), no. 3, 252-275.

\bibitem[DFFHMPP]{DFFHMPP}
R. Dorward, P. Ford, E. Fourakis, P. Harris, S. Miller, E. Palsson, H. Paugh, \emph{New Behavior in Legal Decompositions Arising From Non-Positive Linear Recurrences}, Fibonacci Quarterly, \textbf{55} (2017), no. 3, 252-275.

\bibitem[DG]{DG}
M. Drmota and J. Gajdosik, \emph{The distribution of the sum-of-digits function}, J. Th\'{e}or. Nombr\'{e}s
Bordeaux \textbf{10} (1998), no. 1, 17-32.

\bibitem[GT]{GT}
P. J. Grabner and R. F. Tichy, \emph{Contributions to digit expansions with respect to linear recurrences}, J. Number Theory \textbf{36} (1990), no. 2, 160-169.

\bibitem[Ho]{Ho}
V. E. Hoggatt, \emph{Generalized Zeckendorf theorem}, Fibonacci Quarterly \textbf{10} (1972), no. 1 (special issue
on representations), pages 89-93.

\bibitem[Ke]{Ke}
T. J. Keller, \emph{Generalizations of Zeckendorf's theorem}, Fibonacci Quarterly \textbf{10} (1972), no. 1 (special
issue on representations), pages 95-102.

\bibitem[KKMW]{KKMW}
M. Kolo\u{g}lu, G. Kopp, S. J. Miller and Y. Wang, \emph{On the number of summands in Zeckendorf Decompositions}, Fibonacci Quarterly \textbf{49} (2011), no. 2, 116-130.

\bibitem[Len]{Len}
T. Lengyel, \emph{A Counting Based Proof of the Generalized Zeckendorf's Theorem}, Fibonacci Quarterly \textbf{44} (2006), no. 4, 324-325.

\bibitem[MMMMS2]{MMMMS2}
T. C. Martinez, S.J. Miller, C. Mizgerd, J. Murphy and C. Sun, \emph{Generalizing Zeckendorf's Theorem to Homogeneous Linear Recurrences, II}, preprint, \url{https://arxiv.org/pdf/2001.08455.pdf}

\bibitem[MNPX]{MNPX}
S. J. Miller, D. Nelson, Z. Pan and H. Xu, \emph{On the Asymptotic Behavior of Variance of PLRS Decompositions}, preprint, \url{https://arxiv.org/pdf/1607.04692.pdf}

\bibitem[MT-B]{MT-B}
S. J. Miller and R. Takloo-Bighash, \emph{An Invitation to Modern Number Theory}, Princeton University Press, Princeton, NJ, 2006.

\bibitem[MW]{MW}
S. J. Miller and Y. Wang, \emph{From Fibonacci numbers to Central Limit Type Theorems}, Journal of Combinatorial Theory, Series A \textbf{119} (2012), no. 7, 1398-1413.

\bibitem[Ste]{Ste}
W. Steiner, \emph{Parry expansions of polynomial sequences}, Integers \textbf{2} (2002), Paper A14.

\bibitem[Ze]{Ze} E. Zeckendorf, \emph{Repr\'{e}sentation des nombres naturels par une somme des nombres de Fibonacci ou
de nombres de Lucas}, Bulletin de la Soci\'{e}t\'{e} Royale des Sciences de Li\`{e}ge \textbf{41} (1972), pages 179-182.


\end{thebibliography}
\end{document}